\theoremstyle{plain}
\newtheorem{theorem}{Theorem}[section]
\newtheorem{lemma}[theorem]{Lemma}
\newtheorem{corollary}[theorem]{Corollary}
\newtheorem{remark}[theorem]{Remark}
\newtheorem{definition}[theorem]{Definition}
\newtheorem{notation}[theorem]{Notation}
\newtheorem{main theorem}[theorem]{Main Theorem}
\newtheorem{question}[theorem]{Question}
\newcommand{\ZZ}{\mathbb{Z}}
\newcommand{\QQ}{\mathbb{Q}}
\newcommand{\RR}{\mathbb{R}}
\newcommand{\HH}{\mathbb{H}}
\newcommand{\QQQ}{\hat{\mathbb{Q}}}
\newcommand{\PConway}{\mbox{\boldmath$S$}}
\newcommand{\rtangle}[1]{(B^3,t({#1}))}
\newcommand{\DD}{\mathcal{D}}
\newcommand{\RGPC}[2]{\Gamma({#1};{#2})}
\newcommand{\RGP}[1]{\Gamma_{#1}}
\newcommand{\Hecke}{\mbox{$G$}}
\newcommand{\orbs}{\mbox{\boldmath$S$}}
\newcommand{\svert}{\,|\,}
\newcommand{\llangle}{\langle\langle}
\newcommand{\rrangle}{\rangle\rangle}
\newcommand{\lp}{(\hskip -0.07cm (}
\newcommand{\rp}{)\hskip -0.07cm )}
\renewcommand\subsection{\@startsection{subsection}{2}{0mm}
    {-10.5dd plus-8pt minus-4pt}{10.5dd}
     {\normalsize\upshape}}
\begin{document}

\title{Simple loops on 2-bridge spheres in Heckoid orbifolds for the trivial knot}

\author{Donghi Lee}
\address{Department of Mathematics\\
Pusan National University\\
San-30 Jangjeon-Dong, Geumjung-Gu, Pusan, 609-735, Republic of Korea}
\email{donghi@pusan.ac.kr}

\author{Makoto Sakuma}
\address{Department of Mathematics\\
Graduate School of Science\\
Hiroshima University\\
Higashi-Hiroshima, 739-8526, Japan}
\email{sakuma@math.sci.hiroshima-u.ac.jp}

\keywords{$2$-bridge link group, even Heckoid group, Hecke group,
van Kampen diagram, annular diagram}

\subjclass[2010]{Primary 20F06, 57M25\\
\indent {The first author was supported by a 2-Year Research Grant of
Pusan National University.}}

\begin{abstract}
In this paper,
we give a necessary and sufficient condition
for an essential simple loop on a $2$-bridge sphere
in an even Heckoid orbifold for the trivial knot
to be null-homotopic, peripheral or torsion in the orbifold.
We also give a necessary and sufficient condition
for two essential simple loops on a $2$-bridge sphere
in an even Heckoid orbifold for the trivial knot
to be homotopic in the orbifold.
\end{abstract}
\maketitle


\section{Introduction}
In \cite{lee_sakuma_6}, following Riley's work~\cite{Riley2},
we introduced the {\it Heckoid group $\Hecke(r;n)$ of index $n$ for
a $2$-bridge link, $K(r)$, of slope $r \in \QQ$},
as the orbifold fundamental group
of the {\it Heckoid orbifold $\orbs(r;n)$ of index $n$ for $K(r)$}.
Here $n$ is an integer or a half-integer greater than $1$.
The Heckoid group and the Heckoid
orbifold are said to be {\it even} or {\it odd}
according to whether $n$ is an integer or a half-integer.
When $K(r)$ is the trivial knot
and $n$ is an integer greater than $1$,
the even Heckoid orbifold
$\orbs(r;n)\cong \orbs(0;n)$ is as illustrated in Figure~\ref{fig.trivial_orbifold},
and the even Heckoid group $\Hecke(r;n)\cong \Hecke(0;n)$ is isomorphic to
the index $2$ subgroup $\langle P, SPS^{-1}\rangle$
of the classical {\it Hecke group}
$\langle P, S\rangle$ introduced in \cite{Hecke} (cf. \cite[Remark~2.5]{lee_sakuma_7}), where
\[
P=\begin{pmatrix}
1 & \quad 2\cos\frac{\pi}{2n}\\
0 & \quad 1
\end{pmatrix},
\quad
S=
\begin{pmatrix}
0 & \quad 1\\
-1 & \quad 0
\end{pmatrix}.
\]

\begin{figure}[htbp]
\begin{center}
\includegraphics{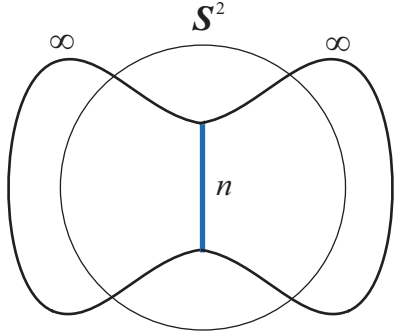}
\end{center}
\caption{\label{fig.trivial_orbifold}
The even Heckoid orbifold $\orbs(0;n)$
of index $n$ for the trivial knot}
\end{figure}

In \cite[Theorem~2.3]{lee_sakuma_7},
we gave a complete characterization of
those essential simple loops on a $2$-bridge sphere
in an even Heckoid orbifold $\orbs(r;n)$
which are null-homotopic in $\orbs(r;n)$.
Furthermore, in a series of papers \cite{lee_sakuma_9, lee_sakuma_10},
we gave a necessary and sufficient condition
for two essential simple loops on a $2$-bridge sphere
in $\orbs(r;n)$ to be homotopic in $\orbs(r;n)$,
and a necessary and sufficient condition
for an essential simple loop on a $2$-bridge sphere
in $\orbs(r;n)$ to be peripheral or torsion in $\orbs(r;n)$.
However these results were obtained for the
generic case when $r$ is non-integral
and we deferred the results for the special
case when $r$ is integral (cf. \cite[Remark~2.5]{lee_sakuma_7}).
The purpose of this note is to obtain similar results
for the remaining special case
when $r$ is integral, that is, $K(r)$ is a trivial knot.

This paper is organized as follows.
In Section~\ref{sec:main_result},
we describe our main result.
In Section~\ref{sec:preliminaries}, we recall
the upper presentation of an even Heckoid group,
over which we introduce van Kampen diagrams and annular diagrams.
Finally, Section~\ref{sec:proof_of_main_theorem}
is devoted to the proof of Theorem~\ref{thm:main_theorem}.

\section{Main result}
\label{sec:main_result}

We quickly recall notation and basic facts introduced in \cite{lee_sakuma_6}.
The {\it Conway sphere} $\PConway$ is the 4-times punctured sphere
which is obtained as the quotient of $\RR^2-\ZZ^2$
by the group generated by the $\pi$-rotations around
the points in $\ZZ^2$.
For each $s \in \QQQ:=\QQ\cup\{\infty\}$,
let $\alpha_s$ be the simple loop in $\PConway$
obtained as the projection of a line in $\RR^2-\ZZ^2$
of slope $s$.
We call $s$ the {\it slope} of the simple loop $\alpha_s$.

For each $r\in \QQQ$,
the {\it $2$-bridge link $K(r)$ of slope $r$}
is the sum of the rational tangle
$\rtangle{\infty}$ of slope $\infty$ and
the rational tangle $\rtangle{r}$ of slope $r$.
Recall that $\partial(B^3-t(\infty))$ and $\partial(B^3-t(r))$
are identified with $\PConway$
so that $\alpha_{\infty}$ and $\alpha_r$
bound disks in $B^3-t(\infty)$ and $B^3-t(r)$, respectively.
By van-Kampen's theorem, the link group $G(K(r))=\pi_1(S^3-K(r))$ is obtained as follows:
\[
G(K(r))=\pi_1(S^3-K(r))
\cong \pi_1(\PConway)/ \llangle\alpha_{\infty},\alpha_r\rrangle
\cong \pi_1(B^3-t(\infty))/\llangle\alpha_r\rrangle.
\]

On the other hand, if $r$ is a rational number and $n\ge 2$ is an integer,
then the even Heckoid orbifold $\orbs(r;n)$ contains a Conway sphere $\PConway$,
and the even Hekoid group $\Hecke(r;n)$,
which is defined as the orbifold fundamental group of $\orbs(r;n)$,
has the following description as the quotient of the fundamental group
of the Conway sphere $\PConway$ (see \cite[p.~242]{lee_sakuma_6}):
\[
\Hecke(r;n)
\cong\pi_1(\PConway)/ \llangle\alpha_{\infty},\alpha_r^n\rrangle
\cong \pi_1(B^3-t(\infty))/\llangle\alpha_r^n\rrangle.
\]

We are interested in the following naturally arising question.

\begin{question}
\label{question1}
{\rm
For $r$ a rational number and $n$ an integer greater than $1$,
consider the even Heckoid orbifold $\orbs(r;n)$ for the $2$-bridge link $K(r)$.
\begin{enumerate}[\indent \rm (1)]
\item
Which essential simple loop $\alpha_s$ on $\PConway$
is null-homotopic in $\orbs(r;n)$?
\item
For two distinct essential simple loops $\alpha_s$ and $\alpha_{s'}$ on $\PConway$,
when are they homotopic in $\orbs(r;n)$?
\item
Which essential simple loop $\alpha_s$ on $\PConway$
is peripheral or torsion in $\orbs(r;n)$?
\end{enumerate}
}
\end{question}

This question originated from Minsky's question~\cite[Question~5.4]{Gordon},
and was completely solved in the series of papers
\cite{lee_sakuma_6, lee_sakuma_7, lee_sakuma_9, lee_sakuma_10}
for the generic case when $r$ is non-integral, that is,
$K(r)$ is not a trivial knot.
See \cite{lee_sakuma_8} for an overview of these works.

We note that
(1) a loop in the orbifold $\orbs(r;n)$ is {\it null-homotopic} in $\orbs(r;n)$
if and only if it determines the trivial conjugacy class of the Heckoid group $\Hecke(r;n)$, and
(2) two loops in $\orbs(r;n)$ are {\it homotopic} in $\orbs(r;n)$
if and only if they determine the same conjugacy class in $\Hecke(r;n)$
(see \cite{BMP, Boileau-Porti} for the concept of homotopy in orbifolds).
We say that a loop in $\orbs(r;n)$ is {\it peripheral} if and only if it is homotopic to a loop
in the paring annulus naturally associated with $\orbs(r;n)$ (see \cite [Section~6]{lee_sakuma_6}),
i.e., it represents the conjugacy class of a power of a meridian of $\Hecke(r;n)$.
We also say that a loop in $\orbs(r;n)$ is {\it torsion} if it represents the conjugacy class
of a non-trivial torsion element of $\Hecke(r;n)$.
If we identify $\Hecke(r;n)$ with a Kleinian group generated by two parabolic transformations
(see \cite[Theorem~2.2]{lee_sakuma_6}),
then a loop $\orbs(r;n)$ is peripheral or torsion
if and only if it corresponds to a parabolic transformation or a non-trivial elliptic transformation
accordingly.
Thus Question~\ref{question1} can be interpreted as a question
on the even Heckoid group $\Hecke(r;n)$.

Let $\DD$ be the {\it Farey tessellation}
of the upper half plane $\HH^2$.
Then $\QQQ$ is identified with the set of the ideal vertices of $\DD$.
Let $\RGP{\infty}$ be the group of automorphisms of
$\DD$ generated by reflections in the edges of $\DD$ with an endpoint $\infty$.
For $r$ a rational number
and $n$ an integer or a half-integer greater than $1$,
let $C_r(2n)$ be the group of automorphisms of $\DD$ generated
by the parabolic transformation, centered on the vertex $r$,
by $2n$ units in the clockwise direction,
and let $\RGPC{r}{n}$ be the group generated by $\RGP{\infty}$ and $C_r(2n)$.
The answer to Question \ref{question1} obtained in
\cite{lee_sakuma_6, lee_sakuma_7, lee_sakuma_9, lee_sakuma_10},
for the general case when $r$ is non-integral,
is given in terms of the action of $\RGPC{r}{n}$ on $\partial \HH^2=\hat\RR$.
The answer to the remaining case when $r$ is an integer is
also given in a similar way.

Observe that, when $n\ge 2$ is an integer,
the group $\Gamma(0;n)$ is the free product of
three cyclic groups of order $2$ generated by the reflections
in the Farey edges $\langle \infty,0\rangle$, $\langle \infty,1\rangle$
and $\langle 0,1/n \rangle$
(see Figure~\ref{fig.fd_orbifold_trivial}).
In fact,
the region, $R$, of $\HH^2$ bounded by these three Farey edges is a fundamental domain
for the action of $\Gamma(0;n)$ on $\HH^2$.
Note that the intersection of the closure of $R$ with $\partial \HH^2$
is the disjoint union of the discrete set $\{\infty, 0\}$ and the closed interval
$I(0;n):=[1/n,1]$.
The following two theorems give a complete answer to
Question~\ref{question1} for the remaining special case.

\begin{figure}[htbp]
\begin{center}
\includegraphics{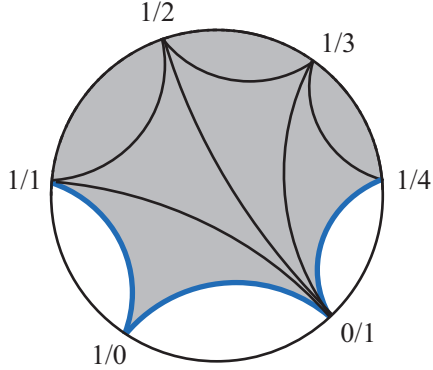}
\end{center}
\caption{\label{fig.fd_orbifold_trivial}
The fundamental domain of $\RGPC{0}{n}$ in the
Farey tessellation (the shaded domain) for $n=4$}
\end{figure}

\begin{theorem}
\label{thm:fundametal_domain}
Suppose that $n$ is an integer greater than $1$. 
Then for any $s\in\QQQ$, there is a unique rational number
$s_0\in I(0;n) \cup \{\infty, 0\}$
such that $s$ is contained in the $\RGPC{r}{n}$-orbit of $s_0$.
Moreover the conjugacy classes
$\alpha_s$ and $\alpha_{s_0}$ in $\Hecke(0;n)$ are equal.
In particular, (i) if $s_0=\infty$, then $\alpha_s$ is the trivial conjugacy class
in $\Hecke(0;n)$, and
(ii) if $s_0=0$, then $\alpha_s$ is torsion
in $\Hecke(0;n)$.
\end{theorem}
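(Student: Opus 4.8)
The plan is to prove the two assertions of the theorem in turn — first that $I(0;n)\cup\{\infty,0\}$ is a set of orbit representatives for $\RGPC{0}{n}$ acting on $\QQQ$, then that $\alpha_s$ and $\alpha_{s_0}$ are conjugate in $\Hecke(0;n)$ — and to read off (i) and (ii) at the end. For the first assertion I would use that $R$ is a fundamental domain for $\RGPC{0}{n}$ on $\HH^2$. Since the three geodesics bounding $R$ are edges of $\DD$, no edge of $\DD$ crosses the interior of $R$, so $\closure R$ is a union of closed ideal triangles of $\DD$; the same then holds for every translate $\gamma\closure R$, $\gamma\in\RGPC{0}{n}$, and because these translates tile $\HH^2$ with pairwise disjoint interiors, each ideal triangle of $\DD$ lies in exactly one of them. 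Hence every $s\in\QQQ$, being an ideal vertex of some triangle of $\DD$, is $\RGPC{0}{n}$-equivalent to an ideal vertex of a triangle contained in $\closure R$; by the shape of $R$ (recall $\closure R\cap\hat\RR=\{\infty,0\}\cup I(0;n)$) these ideal vertices are exactly the rationals in $\{\infty,0\}\cup I(0;n)$, which gives the existence of $s_0$. For uniqueness, every point of the open interval $(1/n,1)$ lies in the interior of $\closure R$ inside $\closure{\HH^2}$ and is therefore the only point of its orbit in $\closure R$; it then remains to check that the four ideal endpoints $\infty,0,1/n,1$ of the mirrors of $R$ are pairwise $\RGPC{0}{n}$-inequivalent and inequivalent to every point of $(1/n,1)$, a direct computation using that $\RGPC{0}{n}$ is the free product $\langle\sigma_0\rangle*\langle\sigma_1\rangle*\langle\rho\rangle$ of the three reflections and that $\infty,0$ are parabolic fixed points (the two mirrors through each are asymptotic). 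Alternatively, existence is obtained constructively: fold $s$ into $[0,1]$ by $\RGP{\infty}$, and while the result lies in $(0,1/n)$ apply the reflection $\rho$ in $\langle 0,1/n\rangle$; since $\rho$ carries $p/q$ to $p/(2np-q)$ with $|2np-q|<q$ whenever $0<p/q<1/n$, the denominator strictly decreases and the process halts inside $\{\infty,0\}\cup I(0;n)$.

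For the conjugacy statement, write $\sigma_0,\sigma_1,\rho$ for the reflections in $\langle\infty,0\rangle,\langle\infty,1\rangle,\langle 0,1/n\rangle$, let $t\colon s\mapsto s+2$ be the translation $\sigma_1\sigma_0$, and let $\tau_0,\tau_\infty$ denote the Dehn twists of $\PConway$ along $\alpha_0,\alpha_\infty$. Then $\RGPC{0}{n}=\langle\sigma_0,\sigma_1,\rho\rangle$ with $\sigma_1=t\sigma_0$, and $\rho\sigma_0$ is a generator of $C_0(2n)$; moreover $t=\tau_\infty^{\pm2}$ and $\rho\sigma_0=\tau_0^{\pm2n}$ as mapping classes of $\PConway$. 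So it suffices to prove that each of $\sigma_0$, $\tau_\infty^{\pm2}$, $\tau_0^{\pm2n}$ fixes the conjugacy class $[\alpha_s]$ in $\Hecke(0;n)$; then so do $\sigma_1$ and $\rho$ (via $\sigma_1=t\sigma_0$, $\rho=\tau_0^{\pm2n}\sigma_0$), and hence every element of $\RGPC{0}{n}$, by induction on word length. For $\tau_\infty^{\pm2}$ and $\tau_0^{\pm2n}$ this is immediate from the effect of a Dehn twist on $\pi_1(\PConway)$: a word representing $\tau_c^{m}(\alpha_s)$ differs from one for $\alpha_s$ only by insertions, at the points where $\alpha_s$ meets $c$, of $\pi_1(\PConway)$-conjugates of $c^{\pm m}$; since $\alpha_\infty=1$ and $\alpha_0^{2n}=(\alpha_0^{n})^{2}=1$ in $\Hecke(0;n)$, these insertions are trivial there, so $\tau_\infty^{\pm2}(\alpha_s)=\alpha_s$ and $\tau_0^{\pm2n}(\alpha_s)=\alpha_s$ in $\Hecke(0;n)$, i.e. $[\alpha_{ts}]=[\alpha_{C_0(2n)s}]=[\alpha_s]$. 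The case of $\sigma_0$ — equivalently $[\alpha_{-s}]=[\alpha_s]$ — is the main obstacle, and I would treat it exactly as the reflections in the Farey edges $\langle\infty,m\rangle$ are treated in the non-integral case: $\sigma_0$ is realized by an orientation-reversing involution of $\orbs(0;n)$ fixing $\PConway$, $\alpha_\infty$ and $\alpha_0$ and carrying $\alpha_s$ to $\alpha_{-s}$, so that the induced automorphism of $\Hecke(0;n)$ — which, modulo inner automorphisms and the conjugacy-class-preserving maps coming from $\tau_0,\tau_\infty$, inverts and reverses the word representing $\alpha_s$ — sends $[\alpha_s]$ to $[\alpha_{-s}]$; and $[\alpha_{-s}]=[\alpha_s]$ then follows from the palindromic symmetry of the standard words for $2$-bridge sphere loops together with the fact that $\alpha_s$ denotes an unoriented loop.

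Combining the two parts, for each $s\in\QQQ$ the unique $s_0\in I(0;n)\cup\{\infty,0\}$ in the $\RGPC{0}{n}$-orbit of $s$ satisfies $[\alpha_s]=[\alpha_{s_0}]$ in $\Hecke(0;n)$. If $s_0=\infty$ then $[\alpha_s]=[\alpha_\infty]=1$, so $\alpha_s$ is the trivial conjugacy class; and if $s_0=0$ then $[\alpha_s]=[\alpha_0]$, where $\alpha_0$ is a nontrivial torsion element of $\Hecke(0;n)$ — indeed $\alpha_0^{n}=1$, while $\alpha_0\neq1$ since otherwise $\Hecke(0;n)$ would equal $\pi_1(S^3-K(0))\cong\ZZ$, contradicting that $\Hecke(0;n)$ is isomorphic to an index-$2$ subgroup of the Hecke group — so $\alpha_s$ is torsion. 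The one genuinely delicate point is the treatment of the reflection $\sigma_0$; the rest is bookkeeping built on the fundamental-domain property of $R$ and on the two defining relations $\alpha_\infty=1$, $\alpha_0^{n}=1$ of $\Hecke(0;n)$.
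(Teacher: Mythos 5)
Your proposal follows essentially the same route as the paper, which delegates the first assertion to the fundamental-domain argument of \cite[Lemma~7.1]{lee_sakuma} and the second to \cite[Theorem~2.4]{lee_sakuma_6}, whose proof is exactly your scheme of realizing the generators of $\Gamma(0;n)$ by mapping classes of $\PConway$ (Dehn twists along $\alpha_\infty$ and $\alpha_0$, plus the orientation-reversing involution inducing $s\mapsto -s$) that act trivially on conjugacy classes once $\alpha_\infty=1$ and $\alpha_0^n=1$ are imposed. One normalization slip worth correcting: the Dehn twist $\tau_\infty$ already induces $s\mapsto s\pm 2$ and $\tau_0^{\pm n}$ (not $\tau_0^{\pm 2n}$) induces the generator of $C_0(2n)$, so as written you only obtain invariance under $\langle \RGP{\infty}, C_0(4n)\rangle$ rather than all of $\Gamma(0;n)$; the fix is immediate, since $\tau_0^{\pm n}$ still inserts only conjugates of $\alpha_0^{\pm n}=1$.
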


\begin{theorem}
\label{thm:main_theorem}
Suppose that $n$ is an integer greater than $1$.
Then the following hold.
\begin{enumerate}[\indent \rm (1)]
\item Any simple loop in $\{\alpha_s \svert s\in I(0;n) \cup \{0\}\}$
does not represent the trivial element of $\Hecke(0;n)$.

\item The simple loops $\{\alpha_s \svert s\in I(0;n)\}$
represent mutually distinct conjugacy classes in $\Hecke(0;n)$.

\item There is no rational number $s \in I(0;n)$
for which $\alpha_s$ is peripheral in $\Hecke(0;n)$.

\item There is no rational number $s \in I(0;n)$
for which $\alpha_s$ is torsion in $\Hecke(0;n)$.
\end{enumerate}
\end{theorem}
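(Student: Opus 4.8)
The plan is to work throughout with the upper presentation $\Hecke(0;n)=\langle a,b\svert u_0^{\,n}\rangle$ recalled in Section~\ref{sec:preliminaries}, in which $u_0$ is the cyclically reduced word in $\{a^{\pm1},b^{\pm1}\}$ representing $\alpha_0$ and $a,b$ are (images of) meridians. Since $K(0)$ is the trivial knot, $u_0$ is a primitive element of $F(a,b)$, so this presentation identifies $\Hecke(0;n)$ with the free product $\ZZ\ast\ZZ/n\ZZ$; concretely $\Hecke(0;n)\cong\langle a,c\svert c^{\,n}\rangle$ for a suitable new generator $c$ representing $\alpha_0$, with $a$ remaining a free generator and the second meridian $b$ becoming a short word in $a$ and $c$. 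For $s\in I(0;n)\cup\{0\}$ let $u_s$ be the word representing $\alpha_s$, and let $v_s$ be the cyclically reduced free-product normal form of $u_s$ after rewriting it in the generators $a,c$. The four assertions will be deduced from a combinatorial description of $v_s$, obtained by analysing reduced van Kampen diagrams (for (1)) and reduced annular diagrams (for (2)--(4)) over the upper presentation.

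For assertion (1): $\alpha_s$ represents the trivial element exactly when $u_s=1$ in $\Hecke(0;n)$, that is, when $v_s$ is the empty cyclic word. This cannot happen: $v_0=c$ has order $n\ge2$, and for $s\in I(0;n)$ the key combinatorial lemma gives that $v_s$ has syllable length at least $2$. (In diagram terms: a face-free van Kampen diagram with boundary $u_s$ would contradict that $u_s$ is non-empty and cyclically reduced in $F(a,b)$, while a diagram with a face has, since the relator is a proper power of the primitive word $u_0$, a tree-like structure over the free-product splitting that again forces $v_s$ to be empty.)

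For assertions (2)--(4): a homotopy in $\orbs(0;n)$ between $\alpha_s$ and $\alpha_{s'}$, respectively a realisation of $\alpha_s$ as peripheral or as torsion, yields a reduced annular diagram over the upper presentation whose two boundary labels are $u_s$ and, respectively, $u_{s'}$, a power of one of the meridians $a$ or $b$, or a power of $c$. Analysing such annular diagrams as above shows that $v_s$ must be a cyclic permutation of the free-product normal form of the other boundary word. For (2) this other normal form is $v_{s'}$, so $v_s$ and $v_{s'}$ are cyclic permutations of one another, and the injectivity clause of the combinatorial lemma yields $s=s'$. For (3) and (4) the other normal form is one of: a single syllable in the $\langle a\rangle$-factor, a single syllable in the $\langle c\rangle$-factor, or a periodic word of the short shape realised by the powers of the meridian $b$; each of these is ruled out by the combinatorial lemma, which asserts that (for $s\in I(0;n)$) $v_s$ has syllable length at least $2$, meets both free factors, and is not a cyclic permutation of any power of a meridian. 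Hence no $\alpha_s$ with $s\in I(0;n)$ is peripheral or torsion, and the $\alpha_s$, $s\in I(0;n)$, are pairwise non-homotopic.

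The main obstacle, and the essential new content, is the combinatorial lemma itself: for $s=q/p\in I(0;n)$ in lowest terms one must describe the cyclic word $v_s$ produced from $u_s$ by passing to the generators $a,c$ and performing free-product reduction, and prove that it (i) has syllable length $\geq2$ and meets both free factors, (ii) is not a cyclic permutation of any meridian power, and (iii) determines $s$. This amounts to controlling precisely which cancellations occur during that rewriting, which is governed by the continued-fraction (Farey) combinatorics of the slopes and by the description of $I(0;n)$ as the fundamental domain of $\RGPC{0}{n}$ from Theorem~\ref{thm:fundametal_domain}; I would establish it by adapting the word-combinatorial machinery developed for the non-integral case in \cite{lee_sakuma_6,lee_sakuma_7,lee_sakuma_9,lee_sakuma_10}, where the analogues of (i)--(iii) carry the bulk of the work. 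Theorem~\ref{thm:fundametal_domain} then supplies the complementary fact that every $\alpha_s$ already coincides with some $\alpha_{s_0}$ with $s_0\in I(0;n)\cup\{\infty,0\}$, so that the present theorem completes the answer to Question~\ref{question1}.
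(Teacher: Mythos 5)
Your overall strategy --- pass to the generators $a$, $c=ab$, identify $\Hecke(0;n)=\langle a,b\svert (ab)^n\rangle$ with the free product $\ZZ\ast\ZZ/n\ZZ$, and read off triviality, conjugacy, peripherality and torsion from free-product normal forms --- is a legitimate and genuinely different route from the paper, which instead runs small-cancellation-style arguments directly on reduced van Kampen and annular diagrams over the upper presentation (no interior edges since $(ab)^n$ admits no pieces, hence single-cell extremal disks and a single-layer annulus, hence a boundary term of length $\ge 2n$ or $>n$ contradicting the bound on $CS(s)$). If carried out, your route would buy a cleaner conceptual packaging of (2)--(4) via the conjugacy theorem for free products. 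However, as written the proposal has a genuine gap: everything is deferred to the ``combinatorial lemma'' describing the normal form $v_s$, and that lemma is exactly the theorem in disguise. You never identify, let alone prove, the quantitative fact that makes the normal form nondegenerate, namely that for $s\in[1/n,1]$ every term of the $S$-sequence $CS(s)$ is at most $n$ (the paper's Corollary~\ref{cor:properties}, an easy consequence of the continued-fraction description of $CS(s)$). This is precisely what guarantees that, after substituting $b=a^{-1}c$ into the alternating word $u_s$, no $c$-syllable has exponent divisible by $n$, so that $v_s$ survives with syllable length $\ge 2$; without it, claims (i)--(iii) of your lemma are unsupported assertions. Moreover your item (iii) (that $v_s$ up to cyclic permutation determines $s$) is strictly stronger than what the paper needs for part (2) --- the paper only uses that for $s\ne s'$ at least one of $CS(s),CS(s')$ has a term $<n$ --- so your version would require additional injectivity work that you have not sketched.

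Two smaller points. First, the parenthetical diagram argument you offer for (1) (``a diagram with a face has \dots a tree-like structure \dots that again forces $v_s$ to be empty'') is circular as stated: you are trying to prove $v_s$ is nonempty, and the actual content of a diagram argument here is the observation that $(ab)^n$ admits no pieces, which you do not make. Second, once you have the free-product normal form, annular diagrams are redundant for (2)--(4); you should either commit to the free-product conjugacy theorem or to the diagram analysis, since invoking both without carrying out either leaves the argument incomplete.
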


The proof of Theorem~\ref{thm:fundametal_domain}
is essentially the same as that of \cite[Theorem~2.2]{lee_sakuma_6}.
In fact, the first assertion is proved as in \cite[Lemma~7.1]{lee_sakuma}
by using the fact
that $R$ is a fundamental domain for the action of $\RGPC{r}{n}$ on $\HH^2$.
The second assertion
is nothing other than \cite[Theorem~2.4]{lee_sakuma_6}.
The last assertion follows immediately from the second assertion.

We shall prove
Theorem~\ref{thm:main_theorem}
with a classical geometric method in combinatorial group theory
such as using van Kampen diagrams and annular diagrams
over two-generator and one-relator presentations,
so-called the upper presentations, of even Heckoid groups.

\section{Preliminaries}
\label{sec:preliminaries}

\subsection{Upper presentations of even Heckoid groups}
\label{subsec:group_presentation}

We introduce the
upper presentation of an even Heckoid group $\Hecke(r;n)$,
where $r$ is a rational number and $n \ge 2$ is an integer.
Recall that
\[
\Hecke(r;n)\cong
\pi_1(\PConway)/ \llangle\alpha_{\infty},\alpha_r^n\rrangle
\cong
\pi_1(B^3-t(\infty))/ \llangle\alpha_r^n\rrangle.
\]
Let $\{a,b\}$ be the standard meridian generator pair of $\pi_1(B^3-t(\infty), x_0)$
as described in \cite[Section~3]{lee_sakuma}
(see also \cite[Section~5]{lee_sakuma_0}).
Then $\pi_1(B^3-t(\infty))$ is identified with the free group $F(a,b)$.
Obtain a word $u_r\in F(a,b)\cong\pi_1(B^3-t(\infty))$
which is represented by the simple loop $\alpha_r$.
It then follows that
\[
\Hecke(r;n) \cong\pi_1(B^3-t(\infty))/\llangle \alpha_r^n\rrangle
\cong \langle a, b \svert u_r^n \rangle.
\]
This two-generator and one-relator presentation
is called the {\it upper presentation} of the even Heckoid group $\Hecke(r;n)$.
It is known by \cite[Proposition~1]{Riley}
that there is a nice formula to find $u_r$ as follows.
(For a geometric description, see \cite[Section~5]{lee_sakuma_0}.)

\begin{lemma}
\label{presentation}
Let $p$ and $q$ be relatively prime integers
such that $p \ge 1$.
For $1 \le i \le p-1$, let
\[\epsilon_i = (-1)^{\lfloor iq/p \rfloor},\]
where $\lfloor x \rfloor$ is the greatest integer not exceeding $x$.
\begin{enumerate}[\indent \rm (1)]
\item If $p$ is odd, then
\[
u_{q/p}=a\hat{u}_{q/p}b^{(-1)^q}\hat{u}_{q/p}^{-1},\]
where
$\hat{u}_{q/p} = b^{\epsilon_1} a^{\epsilon_2} \cdots b^{\epsilon_{p-2}} a^{\epsilon_{p-1}}$.
\item If $p$ is even, then
\[
u_{q/p}=a\hat{u}_{q/p}a^{-1}\hat{u}_{q/p}^{-1},\]
where
$\hat{u}_{q/p} = b^{\epsilon_1} a^{\epsilon_2} \cdots a^{\epsilon_{p-2}} b^{\epsilon_{p-1}}$.
\end{enumerate}
\end{lemma}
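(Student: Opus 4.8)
The plan is to verify the formula by a direct geometric computation in the explicit pillowcase model of the Conway sphere that underlies the meridian pair $\{a,b\}$; the statement is Riley's \cite[Proposition~1]{Riley}, and a geometric account is given in \cite[Section~5]{lee_sakuma_0}, so the real content is the combinatorial bookkeeping that produces the floor-function exponents $\epsilon_i=(-1)^{\lfloor iq/p\rfloor}$. First I would realize $\alpha_{q/p}$ by a lift: a straight line $L$ of slope $q/p$ in $\RR^2-\ZZ^2$, chosen to miss the lattice, projected to $\PConway$, the quotient of $\RR^2-\ZZ^2$ by the $\pi$-rotations about the points of $\ZZ^2$. Under this projection the two strands of $t(\infty)$ are carried by the two families of vertical arcs through the lattice, and $a,b$ are the meridians encircling these two strands. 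As the parameter runs through one period, $L$ crosses the vertical lines $x=i$ in succession; since $\gcd(p,q)=1$ the crossing height is never an integer, so each crossing is transverse and contributes a single letter. Consecutive vertical lines lie over the two distinct strands in alternation, which is exactly why the interior letters alternate $b,a,b,\dots$ in $\hat{u}_{q/p}$.

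Next I would fix the exponent at the crossing over $x=i$. The sign is governed by which horizontal integer band of the lift the crossing occurs in, that is, by the integer part $\lfloor iq/p\rfloor$ of the height; passing into the next band reverses the orientation with which the strand is encircled, so the exponent is exactly $\epsilon_i$. This already pins down the interior word $b^{\epsilon_1}a^{\epsilon_2}\cdots$. I would then read the two ends uniformly. The crossing at $x=0$ is the ``$i=0$'' term, and since $\epsilon_0=(-1)^{\lfloor 0\rfloor}=1$ and index $0$ is of $a$-type, it produces the leading letter $a$; the crossing at $x=p$ is the ``$i=p$'' term, whose sign is $\epsilon_p=(-1)^{\lfloor q\rfloor}=(-1)^q$ and whose generator is of $b$-type when $p$ is odd and of $a$-type when $p$ is even. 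This yields the middle letter $b^{(-1)^q}$ in the odd case, and in the even case $a^{(-1)^q}=a^{-1}$, the last equality because $\gcd(p,q)=1$ forces $q$ odd when $p$ is even.

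Finally, the elliptic involution of $\PConway$, realized by the $\pi$-rotation symmetry of $L$, matches the outgoing arc of the traversal with the returning arc, so that the word closes up in the conjugated shape $a\,\hat{u}_{q/p}\,m\,\hat{u}_{q/p}^{-1}$, where $m$ is the middle letter just computed. Reassembling the pieces gives precisely the two displayed formulas, and the uniform treatment of the boundary terms $i=0$ and $i=p$ explains why both cases have the same outer silhouette $a\,(\cdots)\,m\,(\cdots)^{-1}$ while differing only in the generator type of $m$.

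The step I expect to be the main obstacle is making this closing-up rigorous: fixing a single orientation convention for $a$ and $b$ together with a base point so that every sign comes out as $\epsilon_i$, and proving that the two halves of the traversal genuinely telescope into $\hat{u}_{q/p}$ and its inverse rather than into some other pairing. A cleaner fallback, should the direct tracking become unwieldy, is induction along the Farey tessellation: check the base slopes $\infty,0,1$ by hand and verify that the closed-form word transforms correctly under the automorphisms of $F(a,b)$ induced by the elementary moves relating Farey neighbors, thereby reducing the whole statement to a finite set of substitution identities.
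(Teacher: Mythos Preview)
The paper does not prove this lemma at all: it is quoted verbatim as a known result, with the attribution ``It is known by \cite[Proposition~1]{Riley} that there is a nice formula to find $u_r$ as follows. (For a geometric description, see \cite[Section~5]{lee_sakuma_0}.)'' There is therefore no proof in the paper to compare your proposal against.

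That said, your sketch is precisely the geometric derivation indicated by the second citation: lift $\alpha_{q/p}$ to a straight line of slope $q/p$ in $\RR^2-\ZZ^2$, read off a letter at each crossing with the vertical integer grid, observe that the generator type alternates with the parity of the column index while the exponent is governed by the parity of the row index $\lfloor iq/p\rfloor$, and use the hyperelliptic symmetry to fold the word into the shape $a\,\hat u\,m\,\hat u^{-1}$. Your handling of the middle letter is correct, including the observation that $p$ even forces $q$ odd so that $a^{(-1)^q}=a^{-1}$. The step you flag as the obstacle---fixing orientation conventions so that every sign is literally $\epsilon_i$ and the two halves telescope to $\hat u_{q/p}$ and its inverse---is indeed where the work lies, and it is exactly what \cite[Section~5]{lee_sakuma_0} carries out. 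Your Farey-induction fallback is also viable and is in the spirit of Riley's original algebraic treatment. In short: your plan is sound and aligned with the cited sources, but since the paper treats this as a black box you need not supply a proof here at all.
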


\begin{remark}
\label{rem:epsilon}
{\rm
We have $u_{0}=ab$. Thus if $r$ is an integer, then
\[\Hecke(r;n) \cong \Hecke(0;n) \cong \langle a, b \svert (ab)^n \rangle.\]
}
\end{remark}

Now we define the cyclic sequence
$CS(r)$, which is read from $u_r$ defined in Lemma~\ref{presentation},
and review an important property of this sequence
from \cite{lee_sakuma}.
To this end we fix some definitions and notation.
Let $X$ be a set.
By a {\it word} in $X$, we mean a finite sequence
$x_1^{\epsilon_1}x_2^{\epsilon_2}\cdots x_n^{\epsilon_n}$
where $x_i\in X$ and $\epsilon_i=\pm1$.
Here we call $x_i^{\epsilon_i}$ the {\it $i$-th letter} of the word.
For two words $u, v$ in $X$, by
$u \equiv v$ we denote the {\it visual equality} of $u$ and
$v$, meaning that if $u=x_1^{\epsilon_1} \cdots x_n^{\epsilon_n}$
and $v=y_1^{\delta_1} \cdots y_m^{\delta_m}$ ($x_i, y_j \in X$; $\epsilon_i, \delta_j=\pm 1$),
then $n=m$ and $x_i=y_i$ and $\epsilon_i=\delta_i$ for each $i=1, \dots, n$.
For example, two words $x_1x_2x_2^{-1}x_3$ and $x_1x_3$ ($x_i \in X$) are {\it not} visually equal,
though $x_1x_2x_2^{-1}x_3$ and $x_1x_3$ are equal as elements of the free group with basis $X$.
The length of a word $v$ is denoted by $|v|$.
A word $v$ in
$X$ is said to be {\it reduced} if $v$ does not contain $xx^{-1}$ or $x^{-1}x$ for any $x \in X$.
A word is said to be {\it cyclically reduced}
if all its cyclic permutations are reduced.
A {\it cyclic word} is defined to be the set of all cyclic permutations of a
cyclically reduced word. By $(v)$ we denote the cyclic word associated with a
cyclically reduced word $v$.
Also by $(u) \equiv (v)$ we mean the {\it visual equality} of two cyclic words
$(u)$ and $(v)$. In fact, $(u) \equiv (v)$ if and only if $v$ is visually a cyclic shift
of $u$.

\begin{definition}
\label{def:alternating}
{\rm
(1) Let $(v)$ be a cyclic word in
$\{a, b\}$. Decompose $(v)$ into
\[
(v) \equiv (v_1 v_2 \cdots v_t),
\]
where all letters in $v_i$ have positive (resp., negative) exponents,
and all letters in $v_{i+1}$ have negative (resp., positive) exponents (taking
subindices modulo $t$). Then the {\it cyclic} sequence of positive integers
$CS(v):=\lp |v_1|, |v_2|, \dots, |v_t| \rp$ is called
the {\it cyclic $S$-sequence of $(v)$}.
Here the double parentheses denote that the sequence is considered modulo
cyclic permutations.

(2) A reduced word $v$ in $\{a,b\}$ is said to be {\it alternating}
if $a^{\pm 1}$ and $b^{\pm 1}$ appear in $v$ alternately,
i.e., neither $a^{\pm2}$ nor $b^{\pm2}$ appears in $v$.
A cyclic word $(v)$ is said to be {\it alternating}
if all cyclic permutations of $v$ are alternating.
In the latter case, we also say that $v$ is {\it cyclically alternating}.
}
\end{definition}

\begin{definition}
\label{def4.1(3)}
{\rm
For a rational number $s$ with $0<s\le 1$,
let $u_s$ be defined as in Lemma~\ref{presentation}.
Then the symbol $CS(s)$ denotes the cyclic $S$-sequence $CS(u_s)$ of $(u_s)$,
which is called the {\it cyclic S-sequence of slope $s$}).}
\end{definition}

We recall the following basic property of $CS(s)$.

\begin{lemma} [{\cite[Proposition~4.3]{lee_sakuma}}]
\label{lem:properties}
Suppose that $s$ is a rational number
with $0<s\le1$, and write $s$ as a continued fraction:
\begin{center}
\begin{picture}(230,70)
\put(0,48){$\displaystyle{
s=[m_1,m_2, \dots,m_k]:=
\cfrac{1}{m_1+
\cfrac{1}{ \raisebox{-5pt}[0pt][0pt]{$m_2 \, + \, $}
\raisebox{-10pt}[0pt][0pt]{$\, \ddots \ $}
\raisebox{-12pt}[0pt][0pt]{$+ \, \cfrac{1}{m_k}$}
}},}$}
\end{picture}
\end{center}
where $k \ge 1$, $(m_1, \dots, m_k) \in (\mathbb{Z}_+)^k$ and
$m_k \ge 2$ unless $k=1$. Then the following hold.
\begin{enumerate}[\indent \rm (1)]
\item Suppose $k=1$, i.e., $s=1/m_1$.
Then $CS(s)=\lp m_1,m_1 \rp$.

\item Suppose $k\ge 2$. Then each term of $CS(s)$ is either $m_1$ or $m_1+1$.
\end{enumerate}
\end{lemma}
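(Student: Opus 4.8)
The plan is to work directly with the sequence of exponent-signs of the word $u_{q/p}$, where $s=q/p$ is written in lowest terms with $0<q\le p$. By Definition~\ref{def:alternating}(1) the cyclic $S$-sequence $CS(s)=CS(u_{q/p})$ records exactly the lengths of the maximal runs of constant sign in the cyclic exponent sequence of $u_{q/p}$, so the entire problem reduces to understanding this run-length structure. Writing $f(i)=\lfloor iq/p\rfloor$, the sign of the $i$-th letter of $\hat u_{q/p}$ is $\epsilon_i=(-1)^{f(i)}$, so a sign change between consecutive letters occurs precisely where $f$ increments. Since $0<q/p\le 1$, each increment of $f$ is by exactly $1$, and the increment positions are $i_n=\lceil np/q\rceil$ for $n=1,\dots,q$, with $i_q=p$. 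Thus the run lengths inside $\hat u_{q/p}$ are the interior gaps $i_{n+1}-i_n$ together with the two end pieces, and it remains to combine these with the extra letters in $u_{q/p}=a\,\hat u_{q/p}\,c\,\hat u_{q/p}^{-1}$.

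For the base case $k=1$ (so $s=1/m_1$) I would compute $u_{1/m_1}$ directly from Lemma~\ref{presentation}: here $\epsilon_i=+1$ for all $i$, so $u_{1/m_1}$ consists of a block of $m_1$ positive letters followed, after recombining the middle letter $c$ with $\hat u_{1/m_1}^{-1}$, by a block of $m_1$ negative letters. This gives $CS(1/m_1)=\lp m_1,m_1\rp$ and settles assertion~(1).

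For $k\ge 2$ the first step is to record that $p/q=1/s\in(m_1,m_1+1)$ and is non-integral (as $q\ge 2$); this follows from $s=1/(m_1+[m_2,\dots,m_k])$ with $[m_2,\dots,m_k]\in(0,1)$. The elementary identity $\lceil (n+1)x\rceil-\lceil nx\rceil\in\{\lfloor x\rfloor,\lceil x\rceil\}$ for non-integral $x$, applied with $x=p/q$, then shows that every interior gap $i_{n+1}-i_n$ equals $m_1$ or $m_1+1$. A short computation shows the two end runs of $\hat u_{q/p}$ both have length $m_1$: the first has length $i_1-1=\lceil p/q\rceil-1=m_1$, and the last has length $p-i_{q-1}=p-(p-m_1)=m_1$.

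The final and most delicate step is the seam analysis for the cyclic word $u_{q/p}=a\,\hat u_{q/p}\,c\,\hat u_{q/p}^{-1}$, where $c=b^{(-1)^q}$ if $p$ is odd and $c=a^{-1}$ if $p$ is even. Using $\epsilon_1=+1$, $\epsilon_{p-1}=(-1)^{q-1}$, and the fact that $q$ is odd whenever $p$ is even, I would verify case by case on the parity of $p$ that the leading $a$ merges with the first run of $\hat u_{q/p}$ (turning a run of length $m_1$ into one of length $m_1+1$), that $c$ merges with the first run of $\hat u_{q/p}^{-1}$ (again producing length $m_1+1$) while joining the last run of $\hat u_{q/p}$ with opposite sign, and that the remaining junctions likewise join letters of opposite sign, so that no further recombination occurs. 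Since $\hat u_{q/p}^{-1}$ has the same multiset of run lengths as $\hat u_{q/p}$, every run of the cyclic word $u_{q/p}$ then has length $m_1$ or $m_1+1$, which is assertion~(2). The hard part will be exactly this bookkeeping at the four seams: one must confirm the sign matchings at each junction so that no run of anomalous length is created, and the two parity cases $p$ odd and $p$ even must be handled separately.
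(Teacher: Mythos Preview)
The paper does not actually prove this lemma; it is quoted verbatim as \cite[Proposition~4.3]{lee_sakuma} and used as a black box (the paper only derives Corollary~\ref{cor:properties} from it). So there is no in-paper argument to compare against.

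Your direct approach is sound. The identification of the sign-change positions as $i_n=\lceil np/q\rceil$, the observation that $p/q\in(m_1,m_1+1)$ is non-integral when $k\ge 2$ (so each gap $i_{n+1}-i_n$ is $m_1$ or $m_1+1$), and the computation of the two end-run lengths of $\hat u_{q/p}$ as $m_1$ are all correct. The seam analysis also checks out: using $\epsilon_1=+1$ and $\epsilon_{p-1}=(-1)^{q-1}$, one finds in both parity cases that the prefix letter $a$ merges with the initial run of $\hat u_{q/p}$ to give length $m_1+1$, the letter $c$ has sign opposite to the terminal run of $\hat u_{q/p}$ but equal to the initial run of $\hat u_{q/p}^{-1}$ (again producing $m_1+1$), and the cyclic wrap from the end of $\hat u_{q/p}^{-1}$ back to $a$ involves opposite signs. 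Thus every run in the cyclic word has length $m_1$ or $m_1+1$.

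Two small remarks. First, you should state explicitly that $k\ge 2$ forces $q\ge 2$ (since $q=1$ would give $s=1/p=[p]$ with $k=1$); you use this both to ensure $p/q$ is non-integral and to make sense of $i_{q-1}$. Second, in the phrase ``the last has length $p-i_{q-1}=p-(p-m_1)=m_1$'' you are implicitly using that $i_{q-1}=\lceil p-p/q\rceil=p-m_1$, which relies on $p/q$ being non-integral; it would not hurt to spell that out. With those clarifications your argument is complete and gives a self-contained proof of the cited proposition.
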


\begin{corollary}
\label{cor:properties}
Suppose that $n$ is an integer greater than $1$. 
If $s$ is a rational number with $1/n \le s \le 1$,
then every term of $CS(s)$ less than or equal to $n$.
\end{corollary}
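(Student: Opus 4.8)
The plan is to read the bound directly off the continued fraction expansion of $s$, using Lemma~\ref{lem:properties}. Since $n \ge 2$ we have $0 < 1/n \le s \le 1$, so Lemma~\ref{lem:properties} applies: write $s = [m_1, m_2, \dots, m_k]$ with $(m_1, \dots, m_k) \in (\ZZ_+)^k$ and $m_k \ge 2$ unless $k = 1$, and split into the two cases $k = 1$ and $k \ge 2$.

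If $k = 1$, then $s = 1/m_1$ and Lemma~\ref{lem:properties}(1) gives $CS(s) = \lp m_1, m_1 \rp$. The hypothesis $1/n \le s$ reads $1/n \le 1/m_1$, i.e. $m_1 \le n$, so both entries of $CS(s)$ are at most $n$.

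If $k \ge 2$, then by Lemma~\ref{lem:properties}(2) every term of $CS(s)$ equals either $m_1$ or $m_1 + 1$, so it suffices to show $m_1 + 1 \le n$. Since $s = 1/(m_1 + [m_2, \dots, m_k])$ with $[m_2, \dots, m_k] > 0$ (all $m_i \ge 1$), we get $1/s = m_1 + [m_2, \dots, m_k] > m_1$; combined with $1/s \le n$, this forces $m_1 < n$, hence $m_1 \le n - 1$ and $m_1 + 1 \le n$, as desired.

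There is no genuine obstacle here; the one point needing a little care is the \emph{strict} inequality $m_1 < 1/s$ in the case $k \ge 2$, which is exactly what upgrades the crude bound $m_1 \le n$ to the bound $m_1 + 1 \le n$ required to control the possibly occurring larger term of $CS(s)$.
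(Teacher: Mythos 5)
Your proof is correct and follows essentially the same route as the paper: both reduce the claim to the bound $m_1\le n-1$ via Lemma~\ref{lem:properties}, the only cosmetic difference being that you split on $k=1$ versus $k\ge 2$ while the paper splits on $s=1/n$ versus $1/n<s\le 1$. Your explicit justification of the strict inequality $m_1<1/s\le n$ in the case $k\ge 2$ is a detail the paper leaves implicit, and it is the right one.
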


\begin{proof}
If $s=1/n$, then $CS(s)=\lp n, n \rp$ by Lemma~\ref{lem:properties}(1),
and hence the assertion clearly holds. So let $1/n< s \le 1$.
If $s=[m_1, \dots, m_k]$ is a continued fraction
as in the statement of Lemma~\ref{lem:properties},
then $m_1 \le n-1$. Hence by Lemma~\ref{lem:properties},
the assertion holds.
\end{proof}

\subsection{Van Kampen diagrams and annular diagrams}
\label{subsec:Van_Kampen_diagrams_and_annular_diagrams}

Let us begin with necessary definitions and notation following \cite{lyndon_schupp}.
A {\it map} $M$ is a finite $2$-dimensional cell complex
embedded in $\RR^2$.
To be precise, $M$ is a finite collection of vertices ($0$-cells), edges ($1$-cells),
and faces ($2$-cells) in $\RR^2$ satisfying the following conditions.
\begin{enumerate}[\indent \rm (i)]
\item A vertex is a point in $\RR^2$.

\item An edge $e$ is homeomorphic to an open interval
such that $\bar e=e\cup\{ a\}\cup \{b\}$,
where $a$ and $b$ are vertices of $M$ which are possibly identical.

\item For each face $D$ of $M$,
there is a continuous map $f$ from the
$2$-ball $B^2$ to $\RR^2$ such that
\begin{enumerate}
\item the restriction of $f$ to the interior of $B^2$
is a homeomorphism onto $D$, and

\item the image of $\partial B^2$ is equal to
$\cup_{i=1}^t \bar e_i$ for some set $\{e_1,\dots, e_t\}$ of edges of $M$.
\end{enumerate}
\end{enumerate}
The underlying space of $M$, i.e., the union of the cells in $M$,
is also denoted by the same symbol $M$.
The boundary (frontier), $\partial M$, of $M$ in $\RR^2$
is regarded as a $1$-dimensional subcomplex of $M$.
An edge may be traversed in either of two directions.
If $v$ is a vertex of a map $M$, $d_M(v)$, the {\it degree of $v$},
denotes the number of oriented edges in $M$ having $v$ as initial vertex.
A vertex $v$ of $M$ is called an {\it interior vertex}
if $v\not\in \partial M$, and an edge $e$ of $M$ is called
an {\it interior edge} if $e\not\subset \partial M$.

A {\it path} in $M$ is a sequence of oriented edges $e_1, \dots, e_t$ such that
the initial vertex of $e_{i+1}$ is the terminal vertex of $e_i$ for
every $1 \le i \le t-1$. A {\it cycle} is a closed path, namely
a path $e_1, \dots, e_t$
such that the initial vertex of $e_1$ is the terminal vertex of $e_t$.
If $D$ is a face of $M$, any cycle of minimal length which includes
all the edges of the boundary, $\partial D$, of $D$
going around once along the boundary of $D$
is called a {\it boundary cycle} of $D$.
To be precise it is defined as follows.
Let $f: B^2 \rightarrow D$ be a continuous map satisfying
condition (iii) above.
We may assume that $\partial B^2$ has a cellular structure
such that the restriction of $f$ to each cell is a homeomorphism.
Choose an arbitrary orientation of $\partial B^2$, and let
$\hat e_1, \dots, \hat e_t$ be the oriented edges of $\partial B^2$,
which are oriented in accordance with the orientation of $\partial B^2$
and which lie on $\partial B^2$ in this cyclic order with respect to the orientation of $\partial B^2$.
Let $e_i$ be the orientated edge $f(\hat e_i)$ of $M$.
Then the cycle $e_1, \dots, e_t$ is a boundary cycle of $D$.

Let $F(X)$ be the free group with basis $X$.
A subset $R$ of $F(X)$ is said to be {\it symmetrized},
if all elements of $R$ are cyclically reduced and, for each $w \in R$,
all cyclic permutations of $w$ and $w^{-1}$ also belong to $R$.

\begin{definition}
{\rm Let $R$ be a symmetrized subset of $F(X)$. An {\it $R$-diagram} is
a pair $(M,\phi)$ of
a map $M$ and a function $\phi$ assigning to each oriented edge $e$ of $M$, as a {\it label},
a reduced word $\phi(e)$ in $X$ such that the following hold.
\begin{enumerate}[\indent \rm (i)]
\item If $e$ is an oriented edge of $M$ and $e^{-1}$ is the oppositely oriented edge,
then $\phi(e^{-1})=\phi(e)^{-1}$.

\item For any boundary cycle $\delta$ of any face of $M$,
$\phi(\delta)$ is a cyclically reduced word
representing an element of $R$.
(If $\alpha=e_1, \dots, e_t$ is a path in $M$, we define $\phi(\alpha) \equiv \phi(e_1) \cdots \phi(e_t)$.)
\end{enumerate}
We denote an $R$-diagram $(M,\phi)$ simply by $M$.
}
\end{definition}

\begin{definition}
\label{def:annular_map}
{\rm
Let a group $G$ be presented by $G=\langle X \,|\, R \, \rangle$ with $R$ being symmetrized.

(1) A connected and simply connected $R$-diagram is called a {\it van Kampen diagram}
over $G=\langle X \,|\, R \, \rangle$.

(2) An $R$-diagram $M$ is called an {\it annular diagram} over
$G=\langle X \,|\, R \, \rangle$, if $\RR^2-M$ has exactly two connected components.
}
\end{definition}

Suppose that $R$ is a symmetrized subset of $F(X)$.
A nonempty word $b$ is called a {\it piece} if there exist distinct $w_1, w_2 \in R$
such that $w_1 \equiv bc_1$ and $w_2 \equiv bc_2$.
Let $D_1$ and $D_2$ be faces (not necessarily distinct) of $M$
with an edge $e \subseteq \partial D_1 \cap \partial D_2$.
Let $e \delta_1$ and $\delta_2e^{-1}$ be boundary cycles of $D_1$ and $D_2$, respectively.
Let $\phi(\delta_1)=f_1$ and $\phi(\delta_2)=f_2$. An $R$-diagram $M$
is said to be {\it reduced}
if one never has $f_2=f_1^{-1}$.
It should be noted that if $M$ is reduced
then $\phi(e)$ is a piece for every interior edge $e$ of $M$.

We recall the following lemma which is a well-known
classical result in combinatorial group theory (see \cite{lyndon_schupp}).

\begin{lemma} [van Kampen]
\label{lem:van_Kampen}
Suppose $G=\langle X \,|\, R \, \rangle$ with $R$ being symmetrized.
Let $v$ be a cyclically reduced
word in $X$. Then $v=1$ in $G$ if and only if
there exists a reduced van Kampen diagram $M$
over $G=\langle X \,|\, R \, \rangle$
with a boundary label $v$.
\end{lemma}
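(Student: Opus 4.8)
The plan is to establish the two implications separately, in both cases arguing by induction on the number of faces of the diagram. For the direction asserting that the existence of a van Kampen diagram forces $v=1$, I would use only that $M$ is a connected, simply connected $R$-diagram with boundary label $v$ (reducedness is not needed for this half), and I would show that $v$ lies in the normal closure $\llangle R\rrangle$ in $F(X)$, so that $v=1$ in $G$. The induction is on the number of faces of $M$. If $M$ has no faces, then $M$ is a tree, so its boundary label freely reduces to the empty word and $v=1$ already in $F(X)$. If $M$ has at least one face, then, since $\partial M$ is a circle bounding the disk $M$, some face $D$ has an edge on $\partial M$; choosing a boundary cycle $\delta$ of $D$ and a path $\gamma$ in the $1$-skeleton from the basepoint to $D$, I would peel $D$ off and read the new outer boundary to obtain a diagram $M'$ with one fewer face whose boundary label $v'$ satisfies $v=\phi(\gamma)\,\phi(\delta)\,\phi(\gamma)^{-1}\,v'$ in $F(X)$. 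Since $\phi(\delta)\in R$ by the definition of an $R$-diagram and $v'\in\llangle R\rrangle$ by the inductive hypothesis, it follows that $v\in\llangle R\rrangle$.

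For the converse, suppose $v=1$ in $G$. Then $v\in\llangle R\rrangle$, so in $F(X)$ one may write $v=w_1 r_1 w_1^{-1}\cdots w_m r_m w_m^{-1}$ with each $r_i\in R$ and each $w_i\in F(X)$. From such an expression I would build a planar ``bouquet of lollipops'': for each factor I attach, at a common basepoint, an arc labeled $w_i$ leading to a polygonal face whose boundary cycle is labeled $r_i$, laying the pieces out in the plane so that they meet only at the basepoint. Reading the outer boundary of this complex yields a word freely equal to the displayed product, hence to $v$. Folding together edges issuing from a common vertex that carry identical labels, and iterating, converts this complex into a genuine connected, simply connected $R$-diagram whose boundary label freely reduces to $v$; collapsing any boundary spikes and using that $v$ is \emph{cyclically} reduced, I would then arrange that the boundary label is exactly $v$. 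This produces a van Kampen diagram with boundary label $v$.

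It remains to replace this diagram by a reduced one with the same boundary label. If $M$ is not reduced, then in the notation preceding the lemma there are faces $D_1,D_2$ sharing an edge $e$ with $f_2=f_1^{-1}$; the subcomplex $\closure{D_1}\cup e\cup\closure{D_2}$ is then a disk whose two free boundary arcs carry mutually inverse labels, so I would excise this cancelling pair and identify the two freed arcs, obtaining a diagram with two fewer faces, still connected and simply connected, and with an unchanged boundary label (up to a free reduction that leaves the cyclically reduced $v$ intact). Since the number of faces strictly decreases, iterating terminates in a reduced van Kampen diagram with boundary label $v$. I expect the main obstacle to be precisely this final surgery together with the folding step of the previous paragraph: one must organize the foldings and the excision-and-regluing so that at every stage planarity and simple connectivity are preserved, the boundary label is not disturbed, and the process provably terminates.
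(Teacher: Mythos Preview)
The paper does not prove this lemma at all: it is stated as a classical result with a reference to Lyndon--Schupp \cite{lyndon_schupp}, and no argument is given. Your sketch is essentially the standard proof one finds in that reference (the bouquet-of-lollipops construction, folding, and surgery to remove cancelling face pairs), so in that sense you are simply supplying what the paper chose to cite rather than prove.

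A couple of small points worth tightening if you pursue this. First, in the forward direction you write ``since $\partial M$ is a circle bounding the disk $M$''; a van Kampen diagram as defined here need not be a topological disk (it may have spurs, or be a tree), so the boundary cycle need not be an embedded circle. The conclusion you want---that some face has an edge on $\partial M$---still holds, but the justification should not invoke $M$ being a disk. Second, in the reduction step you describe $\closure{D_1}\cup e\cup\closure{D_2}$ as a disk; this is not automatic, since $D_1$ and $D_2$ may coincide (the definition explicitly allows this) or may meet along more than a single arc, and in those cases the naive excise-and-glue can fail to preserve planarity or simple connectivity. You correctly flag this as the delicate point; the careful treatment is exactly what Lyndon--Schupp provide, which is why the paper is content to cite them.
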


Let $M$ be an annular diagram over $G=\langle X \,|\, R \, \rangle$,
and let $K$ and $H$ be, respectively,
the unbounded and bounded components of $\RR^2-M$.
We call $\partial K (\subset \partial M)$
the {\it outer boundary} of $M$, while
$\partial H (\subset \partial M)$
is called the {\it inner boundary} of $M$.
Clearly, the {\it boundary} of $M$, $\partial M$, is the union of the outer boundary and the inner boundary.
A cycle of minimal length which contains all the edges in the outer (inner, resp.)
boundary of $M$ going around once along the boundary of $K$ ($H$, resp.)
is an {\it outer {\rm (}inner, {\rm resp.)} boundary cycle} of $M$.
An {\it outer {\rm (}inner, {\rm resp.)} boundary label of $M$} is defined to be a word $\phi(\alpha)$ in $X$
for $\alpha$ an outer (inner, resp.) boundary cycle of $M$.
The annular diagram $M$ is said to be {\it nontrivial}
if it contains a $2$-cell.

We recall another well-known classical result in combinatorial group theory.

\begin{lemma} [{\cite[Lemmas~V.5.1 and V.5.2]{lyndon_schupp}}]
\label{lem:lyndon_schupp}
Suppose $G=\langle X \,|\, R \, \rangle$ with $R$ being symmetrized.
Let $u, v$ be two cyclically reduced words in $X$
which are not trivial in $G$ and which are not conjugate in $F(X)$.
Then $u$ and $v$ represent conjugate elements in $G$ if and only if
there exists a reduced nontrivial annular diagram $M$ over $G=\langle X \,|\, R \, \rangle$
such that $u$ is an outer boundary label and $v^{-1}$ is an inner boundary label of $M$.
\end{lemma}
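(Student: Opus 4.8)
The plan is to prove the two implications separately, reducing each to the disk (van Kampen) case through a cut-and-glue construction and using Lemma~\ref{lem:van_Kampen} as the bridge between diagrams and equalities in $G$.

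For the ``if'' direction, suppose a reduced nontrivial annular diagram $M$ is given, with outer boundary cycle $\sigma$ labelled $u$ and inner boundary cycle $\tau$ labelled $v^{-1}$. First I would choose a simple edge-path $\rho$ in $M$ running from the base vertex of $\sigma$ to the base vertex of $\tau$, and cut $M$ open along $\rho$. Since $M$ is annular (its complement has exactly two components), cutting along an arc joining the two boundary circles yields a connected and simply connected $R$-diagram $N$, that is, a van Kampen diagram, whose boundary cycle traverses $\sigma$, then $\rho$, then $\tau$, then $\rho^{-1}$. Writing $w=\phi(\rho)$, the boundary label of $N$ is therefore a cyclic permutation of $u\,w\,v^{-1}\,w^{-1}$. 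Since the boundary label of any van Kampen diagram represents the trivial element of $G$ (the easy direction of Lemma~\ref{lem:van_Kampen}), we get $u\,w\,v^{-1}\,w^{-1}=1$ in $G$, hence $u=wvw^{-1}$ in $G$ and $u,v$ are conjugate. Note that this implication uses neither the nontriviality hypothesis nor inequivalence over $F(X)$.

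For the ``only if'' direction, suppose $u$ and $v$ are conjugate in $G$, say $u=wvw^{-1}$ in $G$ for some word $w$, so that $u\,w\,v^{-1}\,w^{-1}=1$ in $G$. I would first replace $w$ by a representative making this word cyclically reduced (taking $w$ of minimal length and absorbing any cancellation at the junctions), and then invoke Lemma~\ref{lem:van_Kampen} to obtain a reduced van Kampen diagram $N$ whose boundary cycle factors as $\alpha\beta\gamma\delta$ with $\phi(\alpha)=u$, $\phi(\beta)=w$, $\phi(\gamma)=v^{-1}$, $\phi(\delta)=w^{-1}$. The central construction is then to fold $N$ up: since $\phi(\delta^{-1})=w=\phi(\beta)$, I can identify the arc $\beta$ with $\delta^{-1}$ edge by edge compatibly with the labelling. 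Topologically this converts the disk $N$ into an annulus $M_0$, an annular $R$-diagram whose outer boundary reads $u$ and whose inner boundary reads $v^{-1}$.

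The remaining, and hardest, step is to pass from $M_0$ to a \emph{reduced} nontrivial annular diagram carrying the same boundary labels. I would run the standard reduction: whenever two faces meet along an interior edge with mutually inverse boundary labels, cancel the pair; this strictly decreases the number of $2$-cells, so the process terminates at a reduced annular diagram $M$. The hard part will be to guarantee that $M$ is still a genuine nontrivial annulus carrying the prescribed labels, rather than a collapsed or disk-like object, and this is exactly where the two hypotheses enter. If the reduction were to remove all $2$-cells, then $u$ and $v^{-1}$ would be matched through a diagram without faces, forcing $u$ and $v$ to be conjugate already in $F(X)$, contrary to assumption; and the nontriviality of $u$ and $v$ in $G$ excludes the degenerate possibility that one boundary circle bounds a disk subdiagram so that the annular structure degenerates. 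Verifying that these two hypotheses suffice to preserve both the annular topology and the cyclically reduced boundary labels $u$ and $v^{-1}$ throughout the reduction is the technical heart of the argument, and is precisely the content of \cite[Lemmas~V.5.1 and V.5.2]{lyndon_schupp}.
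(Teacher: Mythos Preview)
The paper does not give a proof of this lemma; it is stated as a citation of \cite[Lemmas~V.5.1 and V.5.2]{lyndon_schupp} and used as a black box. So there is nothing in the paper to compare your argument against.

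That said, your outline is the standard one and matches the proof in Lyndon--Schupp: the ``if'' direction is handled by cutting the annulus along a simple arc joining the two boundary components to obtain a van Kampen diagram with boundary label $uwv^{-1}w^{-1}$; the ``only if'' direction starts from a reduced disk diagram for $uwv^{-1}w^{-1}=1$, identifies the two $w$-arcs to form an annular diagram, and then performs cancellation of reducible pairs of faces. You have also correctly located where the two hypotheses are consumed: non-conjugacy of $u$ and $v$ in $F(X)$ prevents the reduction from eliminating all faces (which would leave a faceless annulus witnessing conjugacy in the free group), and nontriviality of $u$ and $v$ in $G$ prevents either boundary circle from bounding a disk subdiagram. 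The only place to be a bit more careful is the claim that one can arrange $uwv^{-1}w^{-1}$ to be cyclically reduced by adjusting $w$; in Lyndon--Schupp this is handled not by pre-processing $w$ but by allowing the boundary word to be merely reduced and then folding spurs, so if you wanted a self-contained write-up you would need to say a word about that. Otherwise the proposal is sound.
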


\section{Proof of Theorem~\ref{thm:main_theorem}}
\label{sec:proof_of_main_theorem}

\subsection{Proof of Theorem~\ref{thm:main_theorem}(1)}

Suppose on the contrary that there exists a rational number $s \in [1/n,1] \cup \{0\}$
such that for which $\alpha_s$ is null-homotopic in $\orbs(r;n)$.
Then $u_s$ equals the identity in $\Hecke(0;n)=\langle a,b \, |\, (ab)^n\rangle$.
Since $u_0$ is a non-trivial torsion element in
$\Hecke(0;n)$ by \cite[Theorem~IV.5.2]{lyndon_schupp},
we may assume $s \in [1/n,1]$.
By Lemma~\ref{lem:van_Kampen},
there is a reduced connected and simply-connected
diagram $M$ over $\Hecke(0;n)=\langle a, b \svert (ab)^n \rangle$ with
$(\phi(\partial M))=(u_s)$.

\medskip
{\bf Claim.} {\it There is no interior edge in $M$.}

\begin{proof}[Proof of Claim]
Suppose on the contrary that there are two $2$-cells $D_1$ and $D_2$ in $M$
such that $D_1$ and $D_2$ have a common edge $e$.
Since $M$ is reduced, $\phi(e)$ is a piece for the symmetrized subset $R$ of $F(a,b)$
generated by $\{(ab)^n\}$. But this is a contradiction,
since there is no piece for this $R$.
\end{proof}

Choose an extremal disk, say $J$, of $M$.
Here, recall that an {\it extremal disk} of a map $M$
is a submap of $M$ which is topologically a disk
and which has a boundary cycle $e_1, \dots, e_t$
such that the edges $e_1, \dots, e_t$ occur in order
in some boundary cycle of the whole map $M$.
Then by Claim, $J$ consists of only one $2$-cell.
This implies that $CS(\phi(\partial J))=\lp 2n \rp$,
so that $CS(\phi(\partial M))=CS(u_s)=CS(s)$ contains a term greater than or equal to
$2n$, which is a contradiction to Corollary~\ref{cor:properties}.
\qed

\begin{remark}
\rm
{Theorem~\ref{thm:main_theorem}(1) can be also proved
by using Newman's Spelling Theorem \cite[Theorem~3]{Newman}
(cf. \cite[Theorem~IV.5.5]{lyndon_schupp}),
which is a powerful theorem
for the word problem for one relator groups with torsion.
This implies that if a cyclically reduced word $v$
represents the trivial element in
$\Hecke(0;n)\cong \langle a,b \, |\, (ab)^n\rangle$,
then the cyclic word $(v)$
contains a subword of the cyclic word $((ab)^{\pm n})$
of length greater than $(n-1)/n=1-1/n$ times the length of $(ab)^n$,
so $(v)$ contains a subword $w$ of $((ab)^{\pm n})$
such that $|w|>2n(1-1/n)=2n-2 \ge n$.
Hence if $u_s=1$ in $\Hecke(0;n)$ for some $s \in [1/n,1]$,
then $CS(u_s)=CS(s)$ contains a term bigger than $n$,
which is a contradiction to Corollary~\ref{cor:properties}.
}
\end{remark}

\subsection{Proof of Theorem~\ref{thm:main_theorem}(2)}

Suppose on the contrary that there exist two distinct rational numbers $s$ and $s'$
in $[1/n,1]$ for which the simple loops $\alpha_s$ and $\alpha_{s'}$
are homotopic in $\orbs(0;n)$.
Then $u_s$ and $u_{s'}^{\pm 1}$ are conjugate in $\Hecke(0;n)$.
By Lemma~\ref{lem:lyndon_schupp}, there is a reduced nontrivial annular
diagram $M$ over $\Hecke(0;n)=\langle a, b \svert (ab)^n \rangle$ with
$(\phi(\alpha)) \equiv (u_s)$ and $(\phi(\beta)) \equiv (u_{s'}^{\pm 1})$,
where $\alpha$ and $\beta$ are, respectively,
the outer and inner boundary cycles of $M$.
Let the outer and inner boundaries of $M$ be denoted by
$\sigma$ and $\tau$, respectively.

\medskip
{\bf Claim 1.} {\it $\sigma$ and $\tau$ are simple,
i.e., they are homeomorphic to the circle.}

\begin{proof}[Proof of Claim 1]
Suppose on the contrary that $\sigma$ or $\tau$ is not simple.
Then there is an extremal disk, say $J$, of $M$.
As in the proof of Theorem~\ref{thm:main_theorem}(1),
$J$ consists of only one $2$-cell.
Then $CS(u_s)=CS(s)$ or $CS(u_{s'})=CS(s')$ contains a term greater than or equal to
$2n$, which is a contradiction to Corollary~\ref{cor:properties}.
\end{proof}

\medskip
{\bf Claim 2.} {\it $\sigma$ and $\tau$ do not have a common edge.}

\begin{proof}[Proof of Claim 2]
Suppose on the contrary that $\sigma$ and $\tau$ have a common edge $e$
as in Figure~{\rm\ref{fig.layer}(a)}.
Since $\sigma$ and $\tau$ are simple by Claim 1,
and since there is no interior edge in $M$
as in the proof of Theorem~\ref{thm:main_theorem}(1),
there is a vertex $v \in \sigma \cap \tau$ such that $d_M(v)=3$.
But since both $(u_s)$ and $(u_{s'})$ are alternating
and since $((ab)^n)$ is alternating,
this is a contradiction.
\end{proof}

\begin{figure}[h]
\begin{center}
\includegraphics{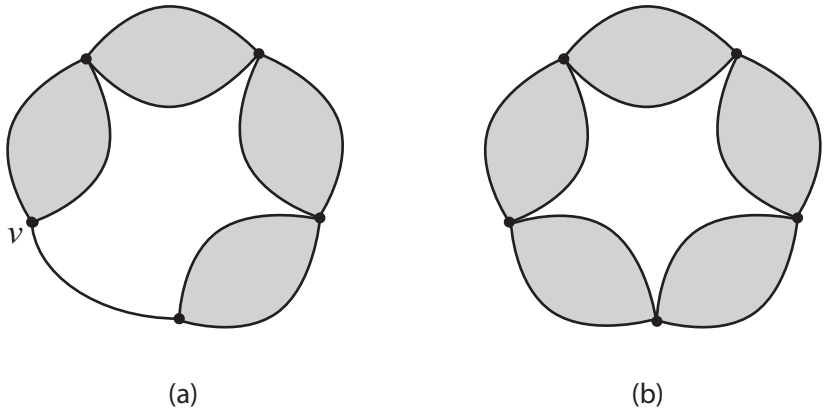}
\end{center}
\caption{\label{fig.layer}
(a) For the proof of Claim 2;
(b) A possible shape of $M$}
\end{figure}

By Claims 1--2 together with the fact that there is no interior edge in $M$
as in the proof of Theorem~\ref{thm:main_theorem}(1),
we see that Figure~\rm\ref{fig.layer}(b) illustrates the only possible shape of $M$.
In particular, $\sigma\cap\tau$ consists of finitely many vertices,
$M$ consists of a single layer, and the number of faces of $M$ is equal to
the number of degree $4$ vertices of $M$.
Here the number of faces is variable.

\begin{notation}
\label{notation:cell_boundary}
{\rm
Suppose that $M$ is a connected annular map as in Figure~\ref{fig.layer}(b).
Choose a vertex, say $v_0$, lying in both the outer and inner boundaries of $M$,
and let $\alpha$ and $\beta$ be, respectively,
the outer and inner boundary cycles of $M$ starting from $v_0$,
where $\alpha$ is read clockwise and $\beta$ is read counterclockwise.
Let $D_1,\dots, D_t$ be the $2$-cells of $M$
such that $\alpha$ goes through their boundaries in this order.
By the symbol $\partial D_i^{\pm}$, we denote an oriented edge path
contained in $\partial D_i$ such that
\[
\begin{aligned}
\alpha&=\partial D_1^+\cdots \partial D_t^+, \\
\beta^{-1}&=\partial D_1^-\cdots \partial D_t^-.
\end{aligned}
\]
}
\end{notation}

Then every $2$-cell $D$ of $M$ satisfies that
$\phi(\partial D^+)$ is a subword of the cyclic word $(\phi(\alpha))=(u_s)$
and that $\phi(\partial D^-)$ is a subword of the cyclic word $(\phi(\beta^{-1}))=(u_{s'}^{\pm 1})$.
Since $s, s' \in [1/n,1]$, every term of both $CS(s)$ and $CS(s')$ is less than or equal to $n$ by Corollary~\ref{cor:properties}.
Furthermore since $s \neq s'$, at least one of $CS(s)$ and $CS(s')$ has a term less than $n$.
Without loss of generality, we assume that $CS(s)$ has a term less than $n$.
This yields that there is a $2$-cell $D$ of $M$ such that
$\phi(\partial D^+)$ has length less than $n$.
But then $\phi(\partial D^-)$ has length bigger than $n$,
since $(\phi(\partial D^+)\phi(\partial D^-)^{-1})=((ab)^{\pm n})$.
This implies that $CS(s')$ contains a term bigger than $n$,
which is a contradiction to Corollary~\ref{cor:properties}.
\qed

\subsection{Proof of Theorem~\ref{thm:main_theorem}(3)}

Suppose on the contrary that there exists a rational number $s$
in $[1/n,1]$ for which the simple loop $\alpha_s$ is peripheral in $\orbs(0;n)$.
Then $u_s$ is conjugate to $a^{\pm t}$ or $b^{\pm t}$ in $\Hecke(1/p;n)$ for some integer $t \ge 1$.
We assume that $u_s$ is conjugate to $a^{\pm t}$ in $\Hecke(1/p;n)$.
(The case when $u_s$ is conjugate to $b^{\pm t}$ in $\Hecke(1/p;n)$ is treated similarly.)
By Lemma~\ref{lem:lyndon_schupp}, there is a reduced nontrivial annular
diagram $M$ over $\Hecke(0;n)=\langle a, b \svert (ab)^n \rangle$ with
$(\phi(\alpha)) \equiv (u_s)$ and $(\phi(\beta)) \equiv (a^{\pm t})$,
where $\alpha$ and $\beta$ are, respectively,
the outer and inner boundary cycles of $M$.

Let the outer and inner boundaries of $M$ be denoted by
$\sigma$ and $\tau$, respectively.
As in Claim~1 in the proof of Theorem~\ref{thm:main_theorem}(2),
$\sigma$ and $\tau$ are simple.
However Claim~2 in the proof of Theorem~\ref{thm:main_theorem}(2)
does not hold, because $(u_s)$ and $((ab)^n)$ are alternating
while $(a^{\pm t})$ is not.
So $\sigma$ and $\tau$ might have a common edge,
and hence $M$ can be shaped as in Figure~\rm\ref{fig.layer}(a)
and (b).
In either case, every $2$-cell $D$ satisfies that
$\phi(\partial D^+)$ is a subword of the cyclic word $(u_s)$
and that $\phi(\partial D^-)$ is a subword of the cyclic word $(a^{\pm t})$.
Here, the only possibility is that $\phi(\partial D^-)$ has length $1$,
since $\phi(\partial D^-)$ is also a subword of the cyclic word $((ab)^{\pm n})$.
But then $\phi(\partial D^+)$ has length $2n-1$, which implies that
$CS(\phi(\alpha))=CS(s)$ contains a term bigger than or equal to
$2n-1>n$. This is a contradiction to Corollary~\ref{cor:properties}.
\qed

\subsection{Proof of Theorem~\ref{thm:main_theorem}(4)}
Suppose that there exists a rational number $s$ in $[1/n,1]$
for which the simple loop $\alpha_s$ is torsion in $\orbs(0;n)$.
Then
$u_s^t=1$ in $\Hecke(0;n)=\langle a,b \, |\, (ab)^n\rangle$
for some integer $t \ge 1$.
By Lemma~\ref{lem:van_Kampen},
there is a reduced connected and simply-connected
diagram $M$ over $\Hecke(0;n)=\langle a, b \svert (ab)^n \rangle$ with
$(\phi(\partial M))=(u_s^t)$.
Choose an extremal disk, say $J$, of $M$
Since there is no interior edge in $M$
as in the proof of Theorem~\ref{thm:main_theorem}(1),
$J$ consists of only one $2$-cell.
But then $CS(\phi(\partial M))=CS(u_s^t)$ contains a term greater than or equal to
$2n$, which is a contradiction because every term of $CS(u_s)$, so of $CS(u_s^t)$,
is less than or equal to $n$.
\qed

\bibstyle{plain}

\bigskip

\end{document}